\newtheorem{lem}{Lemma}
\newtheorem{thm}{Theorem}
\newtheorem*{cor}{Corollary}
\newtheorem*{mthm}{Main Theorem}
\newcommand{\tpoly}{T_{\mathrm{poly}}}
\newcommand{\R}{\mathbb{R}}
\newcommand{\grt}{\mathfrak{grt}}
\newcommand{\g}{\mathfrak{g}}
\newcommand{\gc}{\mathrm{GC}}
\newcommand{\MC}{\begin{tikzpicture}[scale=0.08] \tikzstyle{every node}=[circle, draw, fill=black,
                        inner sep=0pt, minimum width=2pt] \path[draw] (0,-1) node{} -- (0,1) node{}; \end{tikzpicture}}
\newcommand{\om}{\Omega(M,\tpoly^\mathrm{vert}(M))}
\newcommand{\tverty}{\tpoly^\mathrm{vert}\vert_{y=0}}
\begin{document}

\title[Globalizing $L_\infty$-automorphisms of the Schouten algebra]{Globalizing $L_\infty$-automorphisms of the \\ Schouten algebra of polyvector fields}
\author[C. Jost]{Christine Jost}
    \address{Stockholm University, Department of
             Mathematics, SE-106 91 Stockholm, Sweden}
    \email{jost@math.su.se}
    \urladdr{http://www.math.su.se/$\sim$jost}

\thanks{}
\keywords{deformation quantization, polyvector fields, Fedosov quantization, Grothendieck-Teichm\"uller group}
\subjclass[2000]{53D55, 20B27, 16E45}

% \title{Globalizing $L_\infty$-automorphisms of the Schouten algebra of polyvector fields}
% %\title[Globalizing $L_\infty$-automorphisms of the Schouten algebra]{Globalizing $L_\infty$-automorphisms of the \\ Schouten algebra of polyvector fields}
% 
% \author{Christine Jost}
% \ead{jost@math.su.se}
% \ead[url]{http://www.math.su.se/$\sim$jost}
% \address{Stockholm University, Department of
%             Mathematics, SE-106 91 Stockholm, Sweden}
% 
% 
% \begin{keyword}
%  deformation quantization \sep polyvector fields \sep Fedosov quantization \sep Grothendieck-Teichm\"uller group \MSC[2000]{53D55, 20B27, 16E45}
% \end{keyword}

\begin{abstract}
Recently, Willwacher showed that the Grothendieck-Teichm\"uller group GRT acts by $L_\infty$-automorphisms on the Schouten algebra of polyvector fields $\tpoly (\R^d)$ on affine space $\R^d$. In this article, we prove that a large class of $L_\infty$-automorphisms on $\tpoly (\R^d)$, including Willwacher's, can be globa\-lized. That is, given an $L_\infty$-automorphism of $\tpoly (\R^d)$ and a general smooth manifold $M$ with the choice of a torsion-free connection, we give an explicit construction of an $L_\infty$-automorphism of the Schouten algebra $\tpoly (M)$ on the manifold $M$, depending on the chosen connection. The method we use is the Fedosov trick. % as applied in \cite{fedosov1994simple, cattaneo2001globalization, dolgushev2005covariant}.
\end{abstract}

\maketitle

\section{Introduction}

The Grothendieck-Teichm\"uller group is a mysterious group 
defined by Drinfel'd in his study of associators and deformation of Lie algebras \cite{drinfeld1990quasitriangular}. In recent years, connections to objects studied in deformation quantization have become more and more apparent.

In \cite{willwacher2010m} Willwacher proves among other things that the (graded version of the) Grothendieck-Teichm\"uller group GRT acts on the Schouten algebra of polyvector fields $\tpoly(\R^d)$ on affine space. In this article, we extend Willwacher's result to the Schouten algebra of polyvector fields $\tpoly(M)$ on a general smooth manifold $M$. In fact, we prove that any $L_\infty$-automorphism of $\tpoly(\R^d)$ satisfying certain conditions can be globalized. The method we use goes back to Dolgushev's globalization \cite{dolgushev2005covariant} of Kontsevich's deformation quantization \cite{kontsevich2003deformation}. Dolgushev's result in turn uses the famous Fedosov trick \cite{fedosov1994simple}. Similar methods have also been used in \cite{cattaneo2001globalization} and \cite{cattaneo2002local}.
The main result is the following:
\begin{mthm}
 Let $F$ be an $L_\infty$-automorphism of the Schouten algebra $\tpoly(\R^d)$ on affine space satisfying the following conditions:
\setcounter{enumi}{0} 
\begin{enumerate}[(1)] 
 \item $F$ extends to an $L_\infty$-automorphism of $\tpoly(\R^d_\mathrm{formal})$, the Schouten algebra on the ``fat point'' $\R^d_\mathrm{formal} = \mathrm{Spec}\ \R[[x_1, \ldots, x_d]]$. \label{Rformal} 
 \item The extension of $F$ to $\tpoly(\R^d_\mathrm{formal})$ is invariant under linear change of coordinates of $\R^d_\mathrm{formal}$. \label{changeofcoordinates}
 \item $F_1 = \mathrm{id}$, and for $n\ge 2$, the $n$-ary part $F_n$ of $F$ vanishes on vector fields. That means $F_n(v_1, \ldots, v_n)=0$ for vector fields $v_1, \ldots, v_n$. \label{vectorfields}
\item $F$ vanishes if one of the inputs is a vector field that is linear in the standard coordinates on $\R^d$. That means $F_n(\gamma_1, \ldots, A_j^ix^j\frac{\partial}{\partial x^i}, \ldots, \gamma_n) = 0$ for arbitrary polyvector fields $\gamma_1, \ldots , \gamma_n$ and a vector field $A_j^ix^j\frac{\partial}{\partial x^i}$.  \label{linearvectorfield}
\end{enumerate}
Let $M$ be a smooth manifold with a torsion-free connection. Then the construction described in the proof yields an $L_\infty$-automorphism $F^\mathrm{glob}$ of the Schouten algebra $\tpoly(M)$ on $M$, depending on $F$ and the chosen connection. 
\end{mthm}

 In \cite{willwacher2010m} Willwacher constructs a Lie algebra isomorphism from the Grothendieck-Teichm\"uller algebra $\grt$ to the zeroth cohomology $H^0(\mathrm{GC}_2)$ of Kontsevich's graph complex $\mathrm{GC}_2$. The graph complex in turn acts on the Schouten algebra $\tpoly(\R^d)$ on affine space by $L_\infty$-automorphisms defined up to homotopy equivalence. Using our main theorem, we prove that these automorphisms can be used to construct $L_\infty$-automorphisms of the Schouten algebra $\tpoly(M)$ on any smooth manifold  $M$. The essential detail in Willwacher's work is that any element of the Grothendieck-Teichm\"uller algebra can be represented by a graph all of whose vertices are at least trivalent. From this fact it follows that Willwacher's automorphisms satisfy the third and forth condition in our main theorem. 
\begin{cor}
 The Grothendieck-Teichm\"uller group GRT acts on the Schouten algebra $\tpoly(M)$ on a general smooth manifold $M$ by  $L_\infty$-automorphisms defined up to homotopy equivalence. The action depends on the choice of a torsion-free connection on $M$.
\end{cor}

Note that both the main theorem and its corollary depend on the choice of a connection on $M$. The ideas in Appendix C of \cite{bursztyn2012morita} might be used to prove that the homotopy class of the $L_\infty$-automorphism does not depend on the choice of the connection.

 We give a short outline of the content of this article: In Section \ref{vertpolyvectors}, we construct the basic objects used for the globalization, vertical polyvector fields and differential forms with values in them. In Section \ref{fedosovres}, we construct a non-trivial section of  the vector bundle whose sections are differential forms with values in vertical polyvector fields. The construction depends on the choice of a connection on $M$. This so-called Fedosov trick yields a resolution of $\tpoly(M)$ as a Lie algebra. In Section \ref{proofmainthm}, we prove the main theorem constructing an $L_\infty$-automorphism of $\tpoly(M)$ from an automorphism of $\tpoly(\R^d)$ using the Fedosov resolution. In the last section, we first recall Willwacher's action of GRT on $\tpoly(\R^d)$ and finally prove the corollary, i.e., that this action can be globalized using our main theorem.

We use the Einstein summation convention. The degree of an element $f$ of a graded vector space is denoted by $\lvert f \rvert$.

\section{Vertical polyvector fields}\label{vertpolyvectors}

In this section, we present the basic constructions needed in this article, namely vertical polyvector fields, differential forms with values in them and the vertical Schouten bracket. The aim is to construct a large vector bundle on a smooth manifold $M$ such that the fiber of each point is isomorphic as a vector space to the Schouten algebra $\tpoly(\R^d_\mathrm{formal})$ of the ``fat point'' $\R^d_\mathrm{formal}$, the formal completion of affine space $\R^d$ at the origin. The reason we have to use $\R^d_\mathrm{formal}$ instead of $\R^d$ is to ensure convergence of certain recurrence equations later. We will often work with local trivializations of the different vector bundles we consider.

Let $M$ be a $d$-dimensional smooth real manifold. We start by recalling the definition of the usual Schouten algebra $\tpoly(M)$ of polyvector fields on $M$.
The underlying set of the Schouten algebra is the exterior algebra over the $C^\infty(M)$-module of vector fields, i.e. $T_\mathrm{poly}(M) = \bigoplus_{q=0}^d \Lambda^q(\Gamma(TM))$.  
%The Schouten algebra is defined as the exterior algebra $\bigoplus_{q=0}^d \bigwedge^q T_M$ of the tangent bundle $T_M$ of $M$. 
The usual Lie bracket on vector fields extends by the graded Leibniz rule to an odd graded Lie bracket on the exterior algebra, called Schouten bracket. On a local chart $U \subset M$, the underlying graded $\mathbb{R}$-vector space of the Schouten algebra $\tpoly(U)$ is isomorphic to $C^\infty(U)[\varphi_1, \ldots, \varphi_d]$, writing $\varphi_i$ for $\frac{\partial}{\partial x^i}$ and with the grading $\lvert \varphi_i \rvert = 1$. The Schouten bracket $[-,-]^S$ is on this local chart given by the formula
\begin{equation*}
 [f,g]^\mathrm{S} = (-1)^{\lvert f \rvert} \frac{\partial f}{\partial x^i}\frac{\partial g}{\partial \varphi_i} + (-1)^{\lvert f \rvert \lvert g \rvert + \lvert g \rvert} \frac{\partial g}{\partial x^i} \frac{\partial f}{\partial \varphi_i}
\end{equation*}
for $f,g \in \tpoly(U)$.

We continue with the definition of vertical polyvector fields. Let $TM$ be the total space of the tangent bundle of $M$. The projection $\pi: TM \rightarrow M$ induces the differential $\mathrm{d}\pi: T_{TM} \rightarrow \pi^* T_M$. We are interested in the kernel of $\mathrm{d}\pi$, a vector bundle over $TM$. To describe it in a more detailed way, we observe that a local chart of $TM$ is isomorphic to an open subset of $\R^{2d}$. 
It is possible to choose the local coordinate system on $TM$ in such a way that the projection $\pi$ to $M$ is given by $x^i \mapsto x^i$ and $y^i \mapsto 0$. We obtain a local description of sections of $\mathrm{ker}\ \mathrm{d}\pi$,
\begin{equation*}
 \Gamma(U, \mathrm{ker}\ \mathrm{d}\pi) = \{f^i(x,y) \frac{\partial}{\partial y^i}, f^i(x,y) \mbox{ smooth} \}.
\end{equation*}
Global sections of $\mathrm{ker}\ \mathrm{d}\pi$ are what is usually called vertical vector fields. However, we will need a slightly different definition allowing formal power series in the $y$s. Define the $C^\infty(U)[[y^1, \ldots, y^d]]$-module of vertical vector fields on $U \subset M$ by
\begin{equation*}
 T^\mathrm{vert}(U) = \{\sum_{I \in \mathbb{N}^d}f^i_I(x)y^I \frac{\partial}{\partial y^i}, f^i_I(x) \mbox{ smooth} \}.
\end{equation*}
One checks that these glue together to a $C^\infty(M)$-module $T^\mathrm{vert}(M)$. Besides the $C^\infty(M)$-module structure, $T^\mathrm{vert}(M)$ also is a $C^\infty(M)[[y^1, \ldots, y^d]]$-module, where $C^\infty(M)[[y^1, \ldots, y^d]]$ is glued together from the rings $C^\infty(U)[[y^1, \ldots, y^d]]$. The $C^\infty(M)$-module of vertical polyvector fields is then defined as the exterior algebra of $T^\mathrm{vert}(M)$ over $C^{\infty}(M)[[y^1, \ldots, y^d]]$, i.e.,
\begin{equation*}
 \tpoly^\mathrm{vert}(M) = \bigoplus_{q=0}^d \Lambda^q_{C^\infty(M)[[y^1, \ldots, y^d]]} T^\mathrm{vert}(M).
\end{equation*}
Furthermore, differential forms with values in vertical polyvector fields are defined by
\begin{equation*}
 \Omega(M, \tpoly^\mathrm{vert}(M)) = \tpoly^\mathrm{vert}(M) \otimes_{C^\infty(M)} \Omega M,
\end{equation*}
where $\Omega M$ denotes as usual the de Rham algebra of differential forms on $M$.
We work on a local chart $U$ of $TM$, denoting coordinate functions by $x^1, \ldots, x^d, y^1, \ldots, y^d$ as before.
Writing $\frac{\partial}{\partial y^i} = \psi_i$ and $\mathrm{d}x^i = \eta^i$, differential forms with values in vertical polyvector fields are then elements of 
\begin{equation*}
 C^\infty(U)[[y^1, \ldots, y^d]][\psi_1, \ldots, \psi_d,\eta^1, \ldots, \eta^d].
\end{equation*}
Setting the degrees $\lvert y^i \rvert = 0$ and $\lvert \psi_i \rvert = \lvert \eta^i \rvert = 1$, we obtain a grading on $\om$. Denote by $\Omega^r(M, \tpoly^\mathrm{vert}(M))$ the subspace of elements of degree $r$. Putting in another way, we have
\begin{equation*}
 \Omega^r(M, \tpoly^\mathrm{vert}(M)) = \bigoplus_{p+q=r} \tpoly^\mathrm{vert,p} \otimes \Gamma(\Lambda^q(T^*M)),
\end{equation*}
where the elements of $\tpoly^\mathrm{vert,p}$ have degree $p$ with respect to the $\frac{\partial}{\partial y^i}$.
Define the vertical Schouten bracket on $\om$ locally by
\begin{equation*}
 [f,g]^\mathrm{vert} = (-1)^{\lvert f \rvert} \frac{\partial f}{\partial y^i}\frac{\partial g}{\partial \psi_i} + (-1)^{\lvert f \rvert \lvert g \rvert + \lvert g \rvert} \frac{\partial g}{\partial y^i} \frac{\partial f}{\partial \psi_i}
\end{equation*}
for $f,g \in \Omega(U,\tpoly^\mathrm{vert}(U))$. One checks that the definition is independent of the choice of the local chart. As the Schouten bracket, the vertical Schouten bracket has degree -1 and provides $\om$ with the structure of an odd graded Lie algebra.

We also introduce the sub-$C^\infty(M)$-module $\tverty (M)$ of vertical polyvector fields which are constant with respect to the $y$. On a local chart $U$, we have that $\tverty(U)$ is isomorphic as a $C^\infty(U)$-module to $C^\infty(U)[\psi_1, \ldots, \psi_d]$.
One sees easily that $\tverty (M)$ and $\tpoly (M)$ are isomorphic as $C^\infty (M)$-modules.

We conclude this section with the first lemma on vertical polyvector field, showing that $\om$ is a resolution of $\tpoly (M)$ as a $C^\infty (M)$-module. Define
\begin{equation*}
 \delta:  \Omega^r(M, \tpoly^\mathrm{vert}(M)) \rightarrow  \Omega^{r+1}(M, \tpoly^\mathrm{vert}(M))
\end{equation*}
locally by $\delta = \mathrm{d}x^i \frac{\partial}{\partial y^i}$. Furthermore, define a contracting homotopy
\begin{equation*}
 \delta^*:  \Omega^r(M, \tpoly^\mathrm{vert}(M)) \rightarrow  \Omega^{r-1}(M, \tpoly^\mathrm{vert}(M))
\end{equation*}
by
\begin{equation*}
 \delta^*(f_{I,J}(x,\psi)y^I\eta^J) = \frac{1}{p+q} y^a \frac{\partial}{\partial \eta^a} f(x,\psi)y^I\eta^J,
\end{equation*}
where $I$ and $J$ are multi-indices of total degree $p$ and $q$, respectively. For $f=f(x,\psi)y^0\eta^0$, we set $\delta^*(f) = 0$. Furthermore, define the projection
\begin{equation*}
 \sigma: \om \rightarrow \tverty(M) \subset \om
\end{equation*}
locally by $\sigma(\eta^i) = \sigma (y^i) = 0$ and $\sigma(\psi_i) = \psi_i$.
\begin{lem}
For any $f \in \om$, it holds that
\begin{equation} \label{contractinghomotopy}
 f = \sigma f + \delta\delta^* f + \delta^* \delta f,
\end{equation}
hence 
 \begin{itemize}
  \item[(i)] $H^n( \om , \delta)  = 0$ for $n \neq 0$,
  \item[(ii)] $H^0( \om ,\delta ) \cong \tpoly(M)$.
 \end{itemize}
\end{lem}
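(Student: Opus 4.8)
The plan is to prove the homotopy identity (\ref{contractinghomotopy}) by a direct local computation and then read off (i) and (ii) as formal consequences. On a chart $U$ the operators $\delta$, $\delta^*$ and $\sigma$ differentiate only in the $y$ and $\eta$ directions, so the coefficients in $x$ and $\psi_1,\dots,\psi_d$ are inert: applying any of the three to $c(x,\psi)\,y^I\eta^J$ pulls $c$ out up to a Koszul sign, and in the two-fold compositions $\delta\delta^*$ and $\delta^*\delta$ these signs cancel. Hence (\ref{contractinghomotopy}) factors through the coefficients, and by additivity it suffices to verify it on the monomials $y^I\eta^J$ in the remaining variables.

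First I would record the relations $\frac{\partial}{\partial y^i}\,y^a = \delta^a_i + y^a\frac{\partial}{\partial y^i}$ for the even variables and $\frac{\partial}{\partial \eta^a}\,\eta^i = \delta^i_a - \eta^i\frac{\partial}{\partial\eta^a}$ for the odd ones, together with the facts that an even factor commutes with an odd one and that $\frac{\partial}{\partial y^i}$ commutes with $\frac{\partial}{\partial\eta^a}$. Writing $A=\delta=\eta^i\frac{\partial}{\partial y^i}$ and $B=y^a\frac{\partial}{\partial\eta^a}$, I would observe that both operators preserve the total weight $s=(y\text{-degree})+(\eta\text{-degree})$, sending bidegree $(p,q)$ to $(p-1,q+1)$ and $(p+1,q-1)$ respectively. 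Consequently, on the weight-$s$ subspace with $s\ge 1$ one has $\delta^*=\tfrac1s B$, and the normalizing factor $\tfrac{1}{p+q}$ takes the same value $\tfrac1s$ whether it is applied to $f$, to $\delta f$ or to $\delta^* f$. Checking this consistency of the normalization is the one spot that needs care.

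The computational heart is then the anticommutator $AB+BA$. Expanding with the relations above gives $AB=\eta^i\frac{\partial}{\partial\eta^i}+\eta^i y^a\frac{\partial}{\partial y^i}\frac{\partial}{\partial\eta^a}$ and $BA=y^i\frac{\partial}{\partial y^i}-y^a\eta^i\frac{\partial}{\partial\eta^a}\frac{\partial}{\partial y^i}$; the two second-order terms cancel, since $y^a$ commutes with $\eta^i$ and $\frac{\partial}{\partial y^i}$ with $\frac{\partial}{\partial\eta^a}$, leaving
\begin{equation*}
 AB + BA = y^i\frac{\partial}{\partial y^i} + \eta^i\frac{\partial}{\partial\eta^i},
\end{equation*}
the operator measuring the total weight. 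On the weight-$s$ part with $s\ge 1$ this is multiplication by $s$, so $\delta\delta^*+\delta^*\delta=\tfrac1s(AB+BA)=\mathrm{id}$, while $\sigma$ annihilates every monomial of positive weight; on the weight-$0$ part $C^\infty(U)[\psi_1,\dots,\psi_d]$ one has $\delta=\delta^*=0$ and $\sigma=\mathrm{id}$. Adding the two cases yields (\ref{contractinghomotopy}).

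Finally I would deduce (i) and (ii), reading $H^n(\om,\delta)$ as cohomology for the de Rham ($\eta$-)degree, with respect to which $\delta$ raises degree by one. If $f$ is a $\delta$-cocycle of $\eta$-degree $n\neq 0$, then $\sigma f=0$ because $f$ has positive form degree, so (\ref{contractinghomotopy}) collapses to $f=\delta(\delta^* f)$ and $f$ is exact; hence $H^n=0$. A cocycle $f$ of $\eta$-degree $0$ satisfies both $\delta f=0$ and $\delta^* f=0$ (it carries no $\eta$), so (\ref{contractinghomotopy}) reduces to $f=\sigma f$; this identifies the degree-$0$ cocycles with $\tverty(M)$, and as there are no coboundaries in degree $0$ we obtain $H^0\cong\tverty(M)\cong\tpoly(M)$, the last isomorphism of $C^\infty(M)$-modules being the one already observed. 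The only genuine obstacle is bookkeeping: keeping the Koszul signs of the odd variables correct in $AB+BA$, and confirming that $\delta$ and $\delta^*$ both preserve the total $y+\eta$ weight, so that the eigenspace argument with the factor $\tfrac{1}{p+q}$ is legitimate.
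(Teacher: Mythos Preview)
Your argument is correct and follows the same route the paper alludes to with ``easy to check'': a direct local verification of the contracting-homotopy identity via the Euler operator $AB+BA=y^i\partial_{y^i}+\eta^i\partial_{\eta^i}$, followed by the standard deduction of acyclicity. The only point worth flagging is that you read $H^n$ with respect to the form ($\eta$-)degree rather than the total $\psi+\eta$ grading the paper assigns to $\Omega^r$; this is in fact the interpretation under which the lemma is true (with the total grading one would have $H^n\cong\Lambda^n\Gamma(TM)$ for every $n$), and it is also the reading implicit in the paper's later use of the lemma, so your choice is the right one.
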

\begin{proof}
 Equation \eqref{contractinghomotopy} is easy to check. The remainder of the lemma follows because $\tverty (M)$ and $\tpoly (M)$ are isomorphic as $C^\infty(M)$-modules.
\end{proof}

\section{The Fedosov resolution}\label{fedosovres}

It is possible to see $\om$ as sections of a vector bundle over the total space of a vector bundle whose sections form $\tverty (M)$. In the previous lemma, we have showed that the zero section of that bundle yields a resolution of $\tpoly (M) \cong \tverty (M)$ as a $C^\infty (M)$-module. Now we will construct a very non-trivial section that yields a resolution of $\tpoly (M)$ as a Lie algebra.
It is called the Fedosov resolution, as it uses the Fedosov trick \cite{fedosov1994simple}  of using a differential depending on the choice of a torsion-free connection. Besides Fedosov's work, we also rely very much on Dolgushev's \cite{dolgushev2005covariant}. The first two lemmas are taken directly from \cite{dolgushev2005covariant} and are included mostly for self-containedness.

We will work on a local chart of $M$ from now on. All local formulas are independent of the choice of the chart, if not said otherwise.

Choose a torsion-free connection on the manifold $M$ and denote its Christoffel symbols by $\Gamma_{ij}^k(x)$. Define a derivation $\nabla$ on $\om$ as $\nabla = d + [\Gamma,-]^\mathrm{vert}$, where $d = \mathrm{d}x^i \frac{\partial}{\partial x^i}$ and $\Gamma = -\mathrm{d}x^i\Gamma_{ij}^k(x)y^j\frac{\partial}{\partial y^k}$. This derivation is not a differential. However, it holds that
\begin{itemize}
 \item $\nabla^2 = [R,-]^\mathrm{vert}$, where $R= -\frac{1}{2}\mathrm{d}x^i\mathrm{d}x^jR_{kij}^l(x) y^k \frac{\partial}{\partial y^l}$ is given by the Riemann curvature tensor of the connection, and
\item $\delta \nabla + \nabla \delta = 0$.
\end{itemize}
\begin{lem}
 There exists an element $A$ in $\om$ such that $\delta^*A=0$ and 
\begin{equation*}
 D := \nabla - \delta + [A,-]^\mathrm{vert}
\end{equation*}
is a differential, where $A$ has the form $A = \sum_{p=2}^\infty \mathrm{d}x^k A_{k,i_1\ldots i_p}^j(x) y^{i_1} \ldots y^{i_p} \frac{\partial}{\partial y^j}$.
\end{lem}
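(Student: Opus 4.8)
The plan is to reduce the statement ``$D$ is a differential'' to the single equation $D^2=0$, since $D = \nabla - \delta + [A,-]^{\mathrm{vert}}$ is manifestly a degree $+1$ derivation of the vertical Schouten bracket for any $A$ of the prescribed form. First I would expand $D^2$. As $\nabla$, $\delta$ and $[A,-]^{\mathrm{vert}}$ are all odd derivations, so is $D^2$, and using the three structural facts recorded before the lemma — $\nabla^2 = [R,-]^{\mathrm{vert}}$, $\delta\nabla + \nabla\delta = 0$, $\delta^2 = 0$ — together with $[A,-]^{\mathrm{vert}}\circ[A,-]^{\mathrm{vert}} = \tfrac12\,[[A,A]^{\mathrm{vert}},-]^{\mathrm{vert}}$, one obtains
\begin{equation*}
 D^2 = [C,-]^{\mathrm{vert}}, \qquad C := R + \nabla A - \delta A + \tfrac12 [A,A]^{\mathrm{vert}}.
\end{equation*}
Hence $D^2 = 0$ as soon as $C$ is central, and it suffices to arrange the stronger condition $C=0$, i.e.\ to solve the \emph{Fedosov equation} $\delta A = R + \nabla A + \tfrac12[A,A]^{\mathrm{vert}}$.

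To solve it I would use the contracting homotopy of the previous lemma. Imposing $\delta^* A = 0$ and observing that the prescribed form of $A$ forces $\sigma A = 0$, the identity \eqref{contractinghomotopy} turns the Fedosov equation into the fixed-point equation
\begin{equation*}
 A = \delta^*\Bigl(R + \nabla A + \tfrac12 [A,A]^{\mathrm{vert}}\Bigr).
\end{equation*}
This is solved by iterating on the polynomial degree $p$ in the $y$-variables. Since $\delta^*$ raises the $y$-degree by one while $\nabla$ preserves it and $[A,A]^{\mathrm{vert}}$ (with $A$ starting in $y$-degree $2$) raises it, the homogeneous component $A^{(p+1)}$ of $y$-degree $p+1$ is expressed through the $A^{(m)}$ with $m\le p$; the recursion is initialized by $A^{(2)} = \delta^* R$. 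Each step is a finite computation, so $A = \sum_{p\ge 2}A^{(p)}$ is a well-defined element of $\om$, and its terms have exactly the asserted shape. This is precisely the point where passing to $\R^d_{\mathrm{formal}}$, i.e.\ allowing formal power series in $y$, guarantees that the recurrence converges.

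It remains to verify that the $A$ produced by the iteration genuinely satisfies $C=0$, and not merely a projection of the Fedosov equation. Writing $B$ for the right-hand side $R + \nabla A + \tfrac12[A,A]^{\mathrm{vert}}$, the fixed-point equation $A=\delta^* B$ combined with \eqref{contractinghomotopy} and $\sigma B = 0$ (every term of $B$ has positive form degree) gives $\delta A = B - \delta^*\delta B$, whence $C = B - \delta A = \delta^*\delta B$; using $(\delta^*)^2 = 0$ and $\sigma\delta^* = 0$ this yields $\delta^* C = 0$ and $\sigma C = 0$, so that \eqref{contractinghomotopy} applied to $C$ collapses to $C = \delta^*\delta C$. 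The crucial input is then a Bianchi-type identity: using $\nabla^2 = [R,-]^{\mathrm{vert}}$, $\delta\nabla + \nabla\delta = 0$, the graded Jacobi identity, and the curvature identities $\delta R = 0$ and $\nabla R = 0$, a direct computation gives $\delta C = \nabla C + [A,C]^{\mathrm{vert}}$. Feeding this into $C = \delta^*\delta C$ produces $C = \delta^*\bigl(\nabla C + [A,C]^{\mathrm{vert}}\bigr)$, and the same $y$-degree count as above shows the right-hand side lies in strictly higher $y$-degree than its lowest nonvanishing input; thus the minimal homogeneous component of $C$ must vanish, and iterating forces $C = 0$. Consequently $D^2 = [C,-]^{\mathrm{vert}} = 0$, so $D$ is a differential.

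I expect the main obstacle to be this last verification — the interplay of the Bianchi identities and the graded signs in $\delta C = \nabla C + [A,C]^{\mathrm{vert}}$, followed by the minimal-degree argument that upgrades ``$C$ is $\delta^*$-closed and satisfies the linear relation'' to ``$C=0$''. The sign bookkeeping in the graded Lie algebra $\om$ throughout (in particular in $D^2 = [C,-]^{\mathrm{vert}}$ and in the antisymmetry cancellations such as $[R,A]^{\mathrm{vert}} + [A,R]^{\mathrm{vert}} = 0$) is the other place demanding care, though it is routine. Both the construction and these identities are due to Fedosov and Dolgushev, which is why the statement is attributed to them in the surrounding text.
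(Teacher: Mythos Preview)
Your proof is correct and follows essentially the same route as the paper: reduce $D^2=0$ to the Fedosov equation $\delta A = R + \nabla A + \tfrac12[A,A]^{\mathrm{vert}}$, solve it by the recursion $A = \delta^*\bigl(R + \nabla A + \tfrac12[A,A]^{\mathrm{vert}}\bigr)$ using that $\delta^*$ raises $y$-degree, and then verify $C=0$ via the Bianchi identities and the same $y$-degree argument applied to $C=\delta^*(\nabla C+[A,C]^{\mathrm{vert}})$. Your derivation of $\delta^*C=0$ (through $C=\delta^*\delta B$) is slightly more explicit than the paper's, but the argument is otherwise the same.
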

This and the following lemma use the same technique of proof, which we explain in detail at first and use in a more sketchy way later. The main idea is to use the contracting homotopy equation \eqref{contractinghomotopy} to produce a recursive equation.
\begin{proof}[Proof of Lemma 2]
 We have to show that $D^2 = 0$ or, equivalently, that 
\begin{equation}\label{eqA}
R+\nabla A + \frac{1}{2}[A,A]^\mathrm{vert} = \delta A.
\end{equation}
As we are looking for an $A$ such that $\delta^* A=\sigma A = 0$, we insert these identities into the contracting homotopy equation \eqref{contractinghomotopy} and get that $A$ should suffice $A = \delta^* \delta A$. Inserting the left hand side of \eqref{eqA} for $\delta A$ yields the following recursion formula for $A$:
\begin{equation*}
 A = \delta^* R + \delta^*(\nabla A + \frac{1}{2}[A,A]^\mathrm{vert}).
\end{equation*}
The recursion converges because $\delta^*$ increases the degree in the $y$s.

We still have to prove that $A$ actually satisfies $C := R + \nabla A + \frac{1}{2}[A,A]^\mathrm{vert} - \delta A = 0$. Observe that it follows from the Bianchi identities for the Riemann curvature tensor that $\delta R = \nabla R = 0$. Using this, we get that $\delta C = \nabla C + [A,C]^\mathrm{vert}$. Furthermore, it holds that $\sigma C = \delta^* C = 0$. Inserting these equations for $\delta C$, $\delta^* C$ and $\sigma C$ into the contracting homotopy equation \eqref{contractinghomotopy}, we get the recursive equation
\begin{equation*}
 C = \delta^*(\nabla C + [A,C]^\mathrm{vert}).
\end{equation*}
As $\delta^*$ increases the degree in the $y$s, this equation has the unique solution $C=0$, which concludes the proof.
\end{proof}
We are now ready to show that the differential $D$ that we have just constructed still yields a resolution of $\tpoly (M)$ as a $C^\infty (M)$-module.
\begin{lem}
 \begin{itemize}
  \item[(i)] $H^n( \om , D)  = 0$ for $n \neq 0$
  \item[(ii)] $H^0( \om ,D ) \cong \tpoly(M)$
 \end{itemize}
\end{lem}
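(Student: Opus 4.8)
The plan is to prove both statements with the same recursion technique as in the proof of Lemma 2, exploiting that $D = \nabla - \delta + [A,-]^\mathrm{vert}$ is a perturbation of $-\delta$ by operators that do not lower the degree in the $y$s. Indeed, $\nabla$ preserves the $y$-degree (both $d$ and $[\Gamma,-]^\mathrm{vert}$ do, the latter because $\Gamma$ is linear both in $y$ and in the $\frac{\partial}{\partial y^i}$), while $[A,-]^\mathrm{vert}$ strictly raises it, since $A$ is at least quadratic in the $y$s. As $\delta^*$ raises the $y$-degree by one, the composite $\delta^*(\nabla + [A,-]^\mathrm{vert})$ raises the $y$-degree by at least one; hence any fixed-point recursion built from it converges in the $y$-adic topology and, conversely, admits $0$ as its unique solution. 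Throughout I will use the contracting homotopy identity \eqref{contractinghomotopy}, the relations $\sigma\delta^* = (\delta^*)^2 = 0$, and $D^2 = 0$ from Lemma 2.

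I would treat (ii) first, showing that $\sigma$ induces the isomorphism $H^0(\om, D) \cong \tverty(M) \cong \tpoly(M)$. Since $\Omega^{-1}(M,\tpoly^\mathrm{vert}(M)) = 0$, we have $H^0(\om, D) = \ker(D|_{\Omega^0})$, so it suffices to show that $\sigma$ restricts to a bijection from $D$-closed form-degree-$0$ elements onto $\tverty(M)$. For surjectivity, given $a_0 \in \tverty(M)$ I produce a $D$-closed lift with $\sigma a = a_0$: a form-degree-$0$ element satisfies $\delta^* a = 0$ automatically, so $Da = 0$ means $\delta a = \nabla a + [A,a]^\mathrm{vert}$, and inserting this together with $\sigma a = a_0$ into \eqref{contractinghomotopy} gives the recursion $a = a_0 + \delta^*(\nabla a + [A,a]^\mathrm{vert})$. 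This converges, and applying $\sigma$ and $\delta^*$ to it confirms $\sigma a = a_0$ and $\delta^* a = 0$. That the resulting $a$ genuinely satisfies $Da = 0$ is then verified as in Lemma 2: with $c := Da$ one checks $\sigma c = \delta^* c = 0$, derives $\delta c = \nabla c + [A,c]^\mathrm{vert}$ from $Dc = D^2 a = 0$, and concludes $c = \delta^*(\nabla c + [A,c]^\mathrm{vert})$, whose unique solution is $c = 0$. Injectivity is the same recursion with $a_0 = 0$: a $D$-closed form-degree-$0$ element with $\sigma a = 0$ satisfies $a = \delta^*(\nabla a + [A,a]^\mathrm{vert})$ and hence vanishes.

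For (i), I would fix $c \in \Omega^n(M,\tpoly^\mathrm{vert}(M))$ with $n \ge 1$ and $Dc = 0$, and construct a primitive $b$ of form-degree $n-1$. Imposing the gauge $\sigma b = \delta^* b = 0$ and rewriting $Db = c$ as $\delta b = \nabla b + [A,b]^\mathrm{vert} - c$, equation \eqref{contractinghomotopy} turns the problem into the convergent recursion $b = \delta^*(\nabla b + [A,b]^\mathrm{vert}) - \delta^* c$. Applying $\sigma$ and $\delta^*$ to this recursion confirms the gauge conditions. Setting $e := Db - c$, a short computation with the recursion gives $\sigma e = \delta^* e = 0$, while $De = D^2 b - Dc = 0$ yields $\delta e = \nabla e + [A,e]^\mathrm{vert}$; feeding these into \eqref{contractinghomotopy} gives $e = \delta^*(\nabla e + [A,e]^\mathrm{vert})$, whose only solution is $e = 0$. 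Hence $Db = c$, establishing exactness in each positive form-degree.

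The routine parts are the bookkeeping for the $y$-degree shifts and the repeated applications of $\sigma$ and $\delta^*$ to the recursions. The genuinely delicate point, and where I expect the real work to sit, is the verification that the recursively defined $a$ (resp. $b$) actually solves $Da = 0$ (resp. $Db = c$) rather than merely its fixed-point equation; this is where $D^2 = 0$ and the uniqueness-of-the-trivial-solution principle are indispensable, and it must be arranged so that the error term $c$ (resp. $e$) is killed by both $\sigma$ and $\delta^*$ before the final recursion is invoked.
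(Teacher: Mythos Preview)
Your proposal is correct and follows essentially the same approach as the paper's proof: both parts are handled by the contracting-homotopy recursion, first constructing the candidate primitive (or lift) via a convergent fixed-point equation in the $y$-adic filtration, and then verifying that the error term satisfies the homogeneous version of the same recursion and hence vanishes. The only cosmetic differences are that you treat (ii) before (i) and spell out the injectivity half of (ii) separately, whereas the paper folds it into the uniqueness of the recursive solution.
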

\begin{proof}
We start by proving part (i). Let $f$ be a cocycle of degree $\ge 1$, i.e., $f \in \Omega^r(M,\tpoly^\mathrm{vert}(M))$, $r\ge 1$, $Df = 0$. We need to find a $g \in \Omega^{r-1}(M,\tpoly^\mathrm{vert}(M))$ such that $f$ is the image of $g$, i.e., $Dg = f$. We restrict our search to such $g$ that satisfy $\sigma g = \delta^* g = 0$. For these $g$ it holds that $f=Dg = \nabla g - \delta g + [A,g]^\mathrm{vert}$. Furthermore, by the contracting homotopy equation \eqref{contractinghomotopy}, it holds that $g = \delta^* \delta g$. Inserting, we get the recursive equation
\begin{equation*}
 g = -\delta^*f + \delta^*(\nabla g + [A,g]^\mathrm{vert}).
\end{equation*}
The convergence follows as usual from the fact that $\delta^*$ increases the degree in the $y$s.

We show that in fact $h:=Dg-f = 0$. One sees that $\delta^* h = \sigma h = 0$ and $Dh=0$. Hence we get the same recursion equation as for $C$ in the foregoing lemma,
\begin{equation*}
 h = \delta^*(\nabla h + [A,h]^\mathrm{vert}),
\end{equation*}
which has the unique solution $h=0$.

Now we prove part (ii). The aim is to find a suitable section $\tau: \tverty (M) \rightarrow \om$. Let $f_0 \in \tverty (M)$, find a unique $f\in \Omega^0(M,\tpoly^\mathrm{vert}(M))$ such that $Df=0$ and $\sigma f = f_0$. As $f$ has degree 0 it follows that $\delta^* f = 0$. Together with $Df = 0$ this yields the recursive equation
\begin{equation*}
 f = f_0 + \delta^*( \nabla f + [A,f]^\mathrm{vert})
\end{equation*}
with convergence as usual.

We have to show that actually $u:=Df = 0$. As $\sigma u = \delta^* u = 0$ and $Du = 0$, this follows in the same way as for $g$ in the first part of the proof.
\end{proof}
As final step for the Fedosov resolution, we prove now that the Fedosov resolution of $\tpoly (M)$ indeed is a resolution as Lie algebra.
\begin{lem}
The induced Lie algebra structure on $\tpoly (M)$ induced by the vertical Schouten bracket on  $\om$ is isomorphic to the Schouten bracket. 
\end{lem}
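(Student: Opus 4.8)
The plan is to make the bracket induced on $H^0(\om, D)\cong \tpoly(M)$ completely explicit and then compare it term by term with the Schouten bracket. Write $\tau\colon \tverty(M)\to\om$ for the section built in the proof of the previous lemma, sending $f_0$ to the unique $D$-closed element of de Rham degree $0$ with $\sigma(\tau f_0)=f_0$ and $\delta^*\tau f_0=0$; since $\sigma\tau=\mathrm{id}$, the inverse isomorphism $H^0(\om,D)\to\tverty(M)$ is induced by $\sigma$. First I would note that $D=\nabla-\delta+[A,-]^\mathrm{vert}$ is a derivation of the vertical Schouten bracket: the parts $[\Gamma,-]^\mathrm{vert}$ and $[A,-]^\mathrm{vert}$ are inner derivations, while $d=\mathrm{d}x^i\frac{\partial}{\partial x^i}$ and $\delta=\mathrm{d}x^i\frac{\partial}{\partial y^i}$ are derivations because the bracket differentiates only in the $y$ and $\psi$ variables and is insensitive to $\eta=\mathrm{d}x$ and $x$. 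Consequently, for $f_0,g_0\in\tverty(M)$ the element $[\tau f_0,\tau g_0]^\mathrm{vert}$ is again a $D$-cocycle of de Rham degree $0$, and the induced bracket is
\[
 [f_0,g_0]_\mathrm{ind}=\sigma\bigl([\tau f_0,\tau g_0]^\mathrm{vert}\bigr).
\]

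The key computation is the first-order term of the lift in the $y$-variables. Since $D$ preserves the $\psi$-degree and $A$ begins in $y$-degree $2$, the recursion $\tau f_0=f_0+\delta^*(\nabla\tau f_0+[A,\tau f_0]^\mathrm{vert})$ shows that the only contribution to the $y$-linear part comes from $\delta^*\nabla f_0$. A short calculation with $\nabla=d+[\Gamma,-]^\mathrm{vert}$ then gives
\[
 \tau f_0=f_0+y^i\nabla_i f_0+O(y^2),
\]
where $\nabla_i=\frac{\partial}{\partial x^i}-\Gamma_{ij}^k\psi_k\frac{\partial}{\partial\psi_j}$ is precisely the covariant derivative acting on vertical polyvector fields. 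Next I would argue that only these two lowest-order terms survive the projection: in $[\tau f_0,\tau g_0]^\mathrm{vert}=(-1)^{|\tau f_0|}\frac{\partial\tau f_0}{\partial y^i}\frac{\partial\tau g_0}{\partial\psi_i}+\cdots$, applying $\sigma$ forces both the $y$-degree and the $\eta$-degree of the surviving product to vanish, and a degree count then selects exactly the $y$-linear part of one factor against the $y$-constant part of the other. This yields
\[
 [f_0,g_0]_\mathrm{ind}=(-1)^{|f_0|}\nabla_i f_0\cdot\frac{\partial g_0}{\partial\psi_i}+(-1)^{|f_0||g_0|+|g_0|}\nabla_i g_0\cdot\frac{\partial f_0}{\partial\psi_i},
\]
that is, the Schouten bracket formula with $\frac{\partial}{\partial x^i}$ replaced by the covariant derivative $\nabla_i$.

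It then remains to compare this \emph{covariant Schouten bracket} with $[-,-]^\mathrm{S}$ under the $C^\infty(M)$-module identification $\tverty(M)\cong\tpoly(M)$, $\psi_i\mapsto\varphi_i$. Their difference is the sum of the two Christoffel contributions, which after relabelling the summation indices $i\leftrightarrow j$ in the second term is proportional to $\Gamma_{ij}^k\psi_k\,(\frac{\partial f_0}{\partial\psi_j})(\frac{\partial g_0}{\partial\psi_i})$; here I would use that $\Gamma_{ij}^k$ is symmetric in $i,j$ (torsion-freeness) together with the graded-commutativity of the factors $\frac{\partial f_0}{\partial\psi_j}$ and $\frac{\partial g_0}{\partial\psi_i}$. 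A short sign bookkeeping shows the two contributions cancel exactly, so $[f_0,g_0]_\mathrm{ind}=[f_0,g_0]^\mathrm{S}$ and the module isomorphism $\tverty(M)\cong\tpoly(M)$ is an isomorphism of Lie algebras.

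I expect the main obstacle to be this last cancellation, namely checking that the connection-dependent terms drop out: this is exactly where the torsion-free hypothesis enters, and it is the step where the signs coming from the odd grading must be tracked with care. A secondary, more routine point to pin down is the derivation property of $\delta$ with respect to the vertical Schouten bracket, on which the well-definedness of $[-,-]_\mathrm{ind}$ rests.
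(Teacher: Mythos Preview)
Your proof is correct and takes essentially the same route as the paper's: both amount to computing $\sigma(\partial(\tau f_0)/\partial y^i)=\nabla_i f_0$ (the paper reads this off directly from the equation $Df=0$, you via the recursion for $\tau$), obtaining the covariant Schouten bracket, and then cancelling the Christoffel contributions using the symmetry $\Gamma_{ij}^k=\Gamma_{ji}^k$ from torsion-freeness. For your flagged secondary point, the quickest way to see that $\delta$ is a derivation of $[-,-]^{\mathrm{vert}}$ is to note, as the paper does implicitly when writing $D=d+[B,-]^{\mathrm{vert}}$ with $B=\Gamma-\eta^i\psi_i+A$, that $\delta$ is itself an inner derivation.
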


\begin{proof}
To simplify notation, we will identify $\tpoly (M)$ and $\tverty (M)$ in this proof. Especially, we will write $\sigma$ for the composition
\begin{equation*}
 \Omega(M,T_{\mathrm{poly}}^{\mathrm{vert}}(M)) \overset{\sigma}{\longrightarrow} T_{\mathrm{poly}}^{\mathrm{vert}}(M)|_{y=0} \overset{\sim}{\longrightarrow} T_{\mathrm{poly}}(M)
\end{equation*}
and $\tau$ for the composition
\begin{equation*}
 \Omega(M,T_{\mathrm{poly}}^{\mathrm{vert}}(M)) \overset{\tau}{\longleftarrow} T_{\mathrm{poly}}^{\mathrm{vert}}(M)|_{y=0} \overset{\sim}{\longleftarrow} T_{\mathrm{poly}}(M).
\end{equation*}

We have to show that 
 $ \tau[f_0,g_0]^S = [\tau f_0,\tau g_0]^{\mathrm{vert}}$
 for $f_0,g_0 \in T_{\mathrm{poly}}(M)$. 
By the definition of $\tau$ in the proof of the previous lemma, this is equivalent to showing that $[\sigma f, \sigma g]^S = \sigma[f,g]^\mathrm{vert}$ for $f,g \in \Omega^0(M,T_{\mathrm{poly}}^{\mathrm{vert}}(M))$ such that $Df=Dg=0$.

From $Df=0$ it follows that
\begin{equation*}
 \mathrm{d}x^i \frac{\partial f}{\partial y^i} =  \mathrm{d}x^i \frac{\partial f}{\partial x^i} -  \mathrm{d}x^i \Gamma_{ij}^k(x)\psi_k\frac{\partial f}{\partial \psi_j} + \mathrm{d}x^i (\mbox{terms containing }y).
\end{equation*}
Hence, using that $f$ lies in $\Omega^0(M,T_{\mathrm{poly}}^{\mathrm{vert}}(M))$, we have
 $\sigma(\frac{\partial f}{\partial y^i} ) = \sigma( \frac{\partial f}{\partial x^i}) - \sigma( \Gamma_{ij}^k(x)\psi_k\frac{\partial f}{\partial \psi_j})$.
From the explicit formula for the vertical Schouten bracket, we obtain
\begin{equation*}
\sigma[f,g]^{\mathrm{vert}} = [\sigma f,\sigma g]^S  - \sigma S,
\end{equation*}
where
\begin{equation*}
 S 	= (-1)^{\lvert f \rvert} \Gamma_{ij}^k(x)\psi_k\frac{\partial  f}{\partial \psi_j}\frac{\partial  g}{\partial \psi_i} 
		+ (-1)^{\lvert  f \rvert \lvert g \rvert + \lvert g \rvert} \Gamma_{ij}^k(x)\psi_k\frac{\partial  g}{\partial \psi_j}\frac{\partial  f}{\partial \psi_i},
\end{equation*}
which is zero because $\Gamma_{ij}^k(x)$ is symmetric in the lower indices. This proves the claim.
\end{proof}

\section{Proof of the main theorem}\label{proofmainthm}

In this section, we describe a construction that globalizes $L_\infty$-automorphisms of the Schouten algebra if they satisfy certain conditions. The construction depends on the choice of a torsion-free connection on the manifold $M$. The theorem and its proof are analogous to Proposition 3 in Dolgushev's article \cite{dolgushev2005covariant}.
\begin{mthm}
 Let $F$ be an $L_\infty$-automorphism of the Schouten algebra $\tpoly(\R^d)$ on affine space satisfying the following conditions:
\setcounter{enumi}{0} 
\begin{enumerate}[(1)] 
 \item $F$ extends to an $L_\infty$-automorphism of $\tpoly(\R^d_\mathrm{formal})$, the Schouten algebra on the ``fat point'' $\R^d_\mathrm{formal} = \mathrm{Spec}\ \R [[x_1, \ldots, x_d]]$.
 \item The extension of $F$ to $\tpoly(\R^d_\mathrm{formal})$ is invariant under linear change of coordinates of $\R^d_\mathrm{formal}$.
 \item $F_1 = \mathrm{id}$, and for $n\ge 2$, the $n$-ary part $F_n$ of $F$ vanishes on vector fields. That means $F_n(v_1, \ldots, v_n)=0$ for vector fields $v_1, \ldots, v_n$. 
\item $F$ vanishes if one of the inputs is a vector field that is linear in the standard coordinates on $\R^d$. That means $F_n(\gamma_1, \ldots, A_j^ix^j\frac{\partial}{\partial x^i}, \ldots, \gamma_n) = 0$ for arbitrary polyvector fields $\gamma_1, \ldots ,\gamma_n$ and a vector field $A_j^ix^j\frac{\partial}{\partial x^i}$. 
\end{enumerate}
Let $M$ be a smooth manifold and choose a torsion-free connection on $M$. Then the construction below yields an $L_\infty$-automorphism $F^\mathrm{glob}$ of the Schouten algebra $\tpoly(M)$ on $M$, depending on $F$ and the chosen connection. 
\end{mthm}

The construction of the global $L_\infty$-morphism consists of three steps. At first, we construct an $L_\infty$-automorphism of $\Omega(M,\tpoly^\mathrm{vert}(M))$. In the second step, we twist this morphism with a suitable Maurer-Cartan element to yield an $L_\infty$-morphism of $\Omega(M,\tpoly^\mathrm{vert}(M))$ commuting with the differential $D$. In the third step, this will induce an $L_\infty$-automorphism on $H^0(\Omega(M,\tpoly^\mathrm{vert}(M))) \cong \tpoly(M)$.

\subsection*{First step}
Choose an open chart $U$ of the manifold $M$ such that $\Omega(U,\tpoly^\mathrm{vert}(U))$ trivializes to 
\begin{equation*}
 C^\infty(U)[[y^1,\ldots, y^d]][\psi_1,\ldots,\psi_d,\eta^1,\ldots, \eta^d],
\end{equation*}
as described in section \ref{vertpolyvectors}. Because of condition (\ref{Rformal}), there is an extension of $F$ to an $L_\infty$-automorphism $F^\mathrm{formal}$ of 
\begin{equation*}
 \tpoly(\R^d_\mathrm{formal}) \cong \R[[x^1, \ldots, x^d]][\varphi_1,\ldots, \varphi_d]
\cong \R[[y^1, \ldots, y^d]][\psi_1,\ldots, \psi_d].
\end{equation*}
Any element of $\Omega(U,\tpoly^\mathrm{vert}(U))$ can be written as a (possibly infinite) linear combination \begin{equation*}
     \sum_{I,J \in \mathbb{N}^d} f(x^1, \ldots, x^d, \eta^1, \ldots, \eta^d) y^I \psi_J,                                                                                 
\end{equation*}
where $f \in C^\infty(U)[\eta^1, \ldots, \eta^d]$. By $C^\infty(U)[\eta^1, \ldots, \eta^d]$-linear continuation, $F^\mathrm{formal}$ induces an $L_\infty$-automorphism $F^\mathrm{vert}\vert_U$ of $\Omega(U,\tpoly^\mathrm{vert}(U))$. By condition (\ref{changeofcoordinates}), the construction is independent of the choice of $U$. Hence it yields an $L_\infty$-automorphism $F^\mathrm{vert}$ of $\Omega(M,\tpoly^\mathrm{vert}(M))$ with the vertical Schouten bracket.

\subsection*{Second step}

By construction, the automorphism $F^\mathrm{vert}$ commutes with the differential $d = \mathrm{d}x^i\frac{\partial}{\partial x^i}$. In general, however, it does not commute with
\begin{equation*}
 D = d + [\Gamma - \mathrm{d}x^i\frac{\partial}{\partial y^i} + A,-]^{\mathrm{vert}}.
\end{equation*}
We denote $\Gamma - \mathrm{d}x^i\frac{\partial}{\partial y^i} + A$ by $B$ and obtain the compact description $D = d + [B,-]^\mathrm{vert}$.

Before continuing with the proof, we recall some well-known facts about Maurer-Cartan elements. Let 
\begin{equation*}
 \Phi: (\g,[-,-]_\g,d_\g) \rightarrow (\g',[-,-]_{\g'},d_{\g'})
\end{equation*}
 be an $L_\infty$-morphism of dg-Lie algebras and $\pi$ a Maurer-Cartan element of $\g$. Then $\pi' = \sum_{i=1}^\infty \frac{1}{i!} \Phi_i(\pi^i)$ is a Maurer-Cartan element of $\g'$. It follows that $\g_\pi = (\g,[-,-]_\g,d_\g + [\pi,-]_\g )$ and $\g'_{\pi'}=(\g',[-,-]_{\g'},d_{\g'} + [\pi',-]_{\g'})$ are dg-Lie algebras. Furthermore, it holds that $\Phi_\pi = \mathrm{exp}(-\pi') \circ \Phi \circ \mathrm{exp}(\pi)$ is an $L_\infty$-morphism from $\g_\pi$ to $\g'_{\pi'}$, where $\mathrm{exp}(\pi)(T) = \sum_{i=0}^\infty \frac{1}{i!} \pi^i \cdot T $ and $\mathrm{exp}(-\pi')$ is defined analogously.

We apply this construction to our situation, where $F^\mathrm{vert}\vert_U$ is an $L_\infty$-automorphism of $ (\Omega(U,\tpoly^\mathrm{vert}(U)),[-,-]^\mathrm{vert},d)$ and $B$ is a Maurer-Cartan element. From condition (\ref{vectorfields}), it follows that $\sum_{i=1}^\infty \frac{1}{i!} (F^\mathrm{vert}\vert_U)_i(B^i) = B$, as $B$ is a vertical vector field. Hence it follows that $(F^\mathrm{vert}\vert_U)_B = \mathrm{exp}(-B) \circ F^\mathrm{vert}\vert_U \circ \mathrm{exp}(B)$ is an $L_\infty$-automorphism of $(\Omega(U,\tpoly^\mathrm{vert}(U)),[-,-]^\mathrm{vert},D)$, because $D = d + [B,-]^\mathrm{vert}$.

We have worked on the local chart $U$ so far, as the formula for $B$ is dependent on the choice of local coordinates. It will become clear, however, that the definition of $(F^\mathrm{vert}\vert_U)_B$ is not. 
We analyze how $B$ transforms under change of coordinates. The terms $\mathrm{d}x^i\frac{\partial}{\partial y^i}$ and $A$ are invariant under change of coordinates. The transformation of $\Gamma$ is more complicated due to the presence of the Christoffel symbols. We compute that $B$ transforms as $B' = B + \mathrm{d}x^i H_{ij}^k(x) y^j \frac{\partial}{\partial y^k}$ for some $H_{ij}^k(x)$, where the exact form of $H_{ij}^k(x)$ is not important.\footnote{This step is taken directly from \cite{dolgushev2005covariant}, see Equation (58).}
We take a closer look at the explicit formula for $(F^\mathrm{vert}\vert_U)_B$. It holds that $(F^\mathrm{vert}\vert_U)_{B,n} = \sum_{i=1}^\infty \frac{1}{i!} (F^\mathrm{vert}\vert_U)_{n+i}( B^i -)$. 
However, $F^\mathrm{vert}\vert_U$ is zero on any summand of the form $\mathrm{d}x^i H_{ij}^k(x) y^j \frac{\partial}{\partial y^k}$ by condition (\ref{linearvectorfield}). 
Hence the definition of $(F^\mathrm{vert}\vert_U)_B$ is independent of the choice of $U$ and thus glues to an $L_\infty$-automorphism $F^{\mathrm{vert},D}$ of $\om$ commuting with $D$.

\subsection*{Third step}
As $F^{\mathrm{vert},D}$ constructed in the last step commutes with the differential $D$ of $\Omega(M,\tpoly^\mathrm{vert}(M))$, it induces an $L_\infty$-automorphism on cohomology. As $H^0(\Omega(M,\tpoly^\mathrm{vert}(M))) \cong \tpoly(M)$, it hence follows that this is an $L_\infty$-automorphism $F^\mathrm{glob}$ on $\tpoly(M)$ depending on $F$ and the choice of a connection from which the differential $D$ is constructed. This is the last step of the construction of the global $L_\infty$-automorphism $F^\mathrm{glob}$. As we have checked that each step of the construction works if the given automorphism $F$ satisfies the conditions given in the theorem, this also concludes the proof of the main theorem.

\section{Proof of the corollary}\label{proofcorollary}

We start by stating the theorem of Willwacher's that we use.
\begin{thm} (Willwacher \cite{willwacher2010m}) The Grothendieck-Teichm\"uller group GRT acts on the Schouten algebra $\tpoly(\R^d)$ on affine space by $L_\infty$-automorphisms.
\end{thm}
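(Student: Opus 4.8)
The plan is to prove Willwacher's Theorem 1 by reducing it, via the Main Theorem just established, to a corollary; but the final statement I am asked to prove is Willwacher's theorem itself, so I must sketch how one constructs the GRT action on $\tpoly(\R^d)$ directly. The approach I would take is graphical. First I would recall the structure of Kontsevich's graph complex $\gc_2$: its elements are $\R$-linear combinations of isomorphism classes of directed (or undirected, with appropriate sign conventions) graphs, with the differential given by vertex splitting (or edge contraction, dually), and the Lie bracket given by insertion of one graph into the vertices of another. The key structural input I would invoke is Willwacher's identification $\grt \cong H^0(\gc_2)$ of the Grothendieck--Teichm\"uller Lie algebra with the zeroth cohomology of this complex.

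\medskip

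\noindent Second, I would make precise how a graph acts on polyvector fields. Each graph $\gamma$ with $n$ vertices and some edges defines an $n$-ary multidifferential operator on $\tpoly(\R^d)$: one places a polyvector field at each vertex, assigns to each edge $(i,j)$ a contraction $\sum_k \partial_{x^k}$ at the tail paired with $\partial_{\varphi_k}$ (i.e.\ $\partial/\partial\varphi_k$) at the head, and sums over all the edge-index contractions. This is exactly the affine-space analogue of the Kontsevich-type formulas, but with the harmonic-angle weights stripped away so that the operator is purely combinatorial. I would check that this assignment intertwines the graph-complex Lie bracket (vertex insertion) with the Richardson--Nijenhuis bracket of multilinear operators, so that a Maurer--Cartan element of $\gc_2$, equivalently a degree-zero cocycle, gives rise to an $L_\infty$-derivation, and hence by exponentiation to an $L_\infty$-automorphism of $(\tpoly(\R^d),[-,-]^\mathrm{S})$.

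\medskip

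\noindent Third, I would pass from the Lie algebra $\grt$ to the group GRT by exponentiation: given $\xi \in \grt \cong H^0(\gc_2)$, represent it by a cocycle, form the associated $L_\infty$-derivation $D_\xi$ of $\tpoly(\R^d)$, and set $F = \exp(D_\xi)$. The degree count ensures $D_\xi$ raises a suitable filtration degree so that the exponential is well defined as a formal sum, and the $L_\infty$-automorphism it produces is the identity in first order, i.e.\ $F_1 = \mathrm{id}$. I would then verify that this construction is compatible with the group law, so that one obtains an honest action of GRT by $L_\infty$-automorphisms (defined up to homotopy, since cocycles are only well defined modulo coboundaries, and homotopic cocycles yield homotopic automorphisms).

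\medskip

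\noindent The main obstacle I expect is verifying the compatibility of the two brackets, i.e.\ that the map sending a graph to its multidifferential operator is a morphism of Lie algebras (or of $L_\infty$-algebras) intertwining graph insertion with the bracket on operators, including getting every sign and symmetry factor correct in the graded setting. The combinatorics of summing over graph automorphisms and edge-index contractions, and matching them against the Koszul signs in the Schouten/Nijenhuis bracket, is where the real work lies; the reduction to cohomology and the passage to the group are comparatively formal once this morphism is in hand.
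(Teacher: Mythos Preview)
Your proposal is essentially the same approach as the paper's: identify $\grt \cong H^0(\gc_2)$, send graphs to multidifferential operators on $\tpoly(\R^d)$ so that graph insertion matches the Nijenhuis--Richardson bracket, and exponentiate degree-zero cocycles to $L_\infty$-automorphisms defined up to homotopy. The paper packages the middle step operadically (an operad map $\mathsf{Gra}\to\mathrm{End}_{\tpoly(\R^d)[2]}$ inducing a Lie map on $\mathbb{S}$-invariants) and makes explicit one point you compress: the raw map respects brackets but not differentials, and one must observe that the edge graph $\MC$ is sent to the Schouten bracket $\pi_S$, so twisting by this Maurer--Cartan element turns it into a dg-Lie map---this is precisely why a $[\MC,-]$-cocycle lands on an $L_\infty$-derivation (your phrase ``a Maurer--Cartan element of $\gc_2$, equivalently a degree-zero cocycle'' conflates two different notions; you mean cocycle).
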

 As we will need some details from the proof of this theorem later, we include the proof here. The way we present it here is analogous to Section 3 in \cite{merkulov2010grothendieck}. We start by recalling the definitions and constructions needed for the proof: the Lie algebra associated to an operad, Kontsevich's graph complex, and the Chevalley-Eilenberg cochain complex.

Let $\mathcal{P} = \{\mathcal{P}(n)\}_{n\in \mathbb{N}}$ be an operad with partial composition $\circ_i$. Following \cite{kapranov2001modules}, define a Lie algebra structure on the graded vector space $\prod_{n=1}^\infty \mathcal{P}(n)$ by
\begin{equation*}
 [\mu , \nu ]= \sum_{i=1}^m \mu \circ_i \nu - (-1)^{\lvert \mu \rvert \lvert \nu \rvert} \sum_{i=1}^n \nu \circ_i \mu
\end{equation*}
for $\mu \in \mathcal{P}(m)$ and $\nu \in \mathcal{P}(n)$. This Lie bracket induces a Lie algebra structure on the space of $\mathbb{S}$-coinvariants $\prod_{k=1}^\infty \mathcal{P}(k)_{S_k}$. As we work in characteristic 0, the space of $\mathbb{S}$-coinvariants and the space of $\mathbb{S}$-invariants are isomorphic as vector spaces. By this isomorphism, the Lie bracket on $\prod_{n=1}^\infty \mathcal{P}(n)$ also induces a Lie algebra structure on the space of $\mathbb{S}$-invariants $\prod_{k=1}^\infty \mathcal{P}(k)^{S_k}$.

We continue by defining the operad $\mathsf{Gra}$ and graph complex $\mathrm{GC}_2$. Let $\mathrm{gra}_{n,k}$ be the set of all undirected graphs with $n$ numbered vertices and $k$ numbered edges. The symmetric group $S_k$ acts on $\mathrm{gra}_{n,k}$ by renumbering of the edges. Let $\mathrm{sgn}_k$ be the sign representation of  $S_k$. Then define 
\begin{equation*}
 \mathsf{Gra}(n) = \bigoplus_{k\ge 0} \left( \R \langle \mathrm{gra}_{n,k} \rangle \otimes_{S_k} \mathrm{sgn}_k \right) [k].
\end{equation*}
The vector spaces $\mathsf{Gra}(n)$ admit an action of $S_n$ by renumbering of the vertices. Furthermore, there exists a partial composition product: For $\Gamma_1$ in $\mathsf{Gra}(m)$ and $\Gamma_2$ in  $\mathsf{Gra}(n)$ define $\Gamma_1 \circ_i \Gamma_2$ in $\mathsf{Gra}(m+n-1)$ by inserting $\Gamma_2$ for the $i$-th vertex of $\Gamma_1$ and then summing up all possible reconnections to $\Gamma_2$ of edges previously connected to the $i$-th vertex of $\Gamma_1$. The edges are renumbered in a way that puts the edges of $\Gamma_2$ at the end while otherwise keeping the ordering of the edges. For details of the definition, we refer to \cite{willwacher2010m} and \cite{merkulov2010grothendieck}. With the $\mathbb{S}$-action and partial composition just defined, $\mathsf{Gra}$ is an operad.

The $\mathbb{S}$-invariants of $\mathsf{Gra}$ can be seen as graphs with ``unidentifiable'', i.e., no longer numbered, vertices. As explained above, the vector space of $\mathbb{S}$-invariants obtains the structure of a Lie algebra, as does the shifted space $\mathrm{Gra}\{-2\}^\mathbb{S}$, where $\mathrm{Gra}\{-2\}(n)=\mathrm{Gra}(n)[2-2n]$. 
Willwacher denotes this Lie algebra $\mathrm{Gra}\{-2\}^\mathbb{S}$ of graphs with unidentifiable vertices by $\mathrm{fGC}_2$. The subset of graphs whose vertices are at least trivalent forms a sub-Lie algebra denoted by $\mathrm{GC}_2$. One checks that the graph $\MC \in \mathrm{fGC}_2$ satisfies the Maurer-Cartan equation $[\MC,\MC]=0$. Hence it equips $\mathrm{fGC}_2$ with the differential $[\MC, -]$. Furthermore, as proved in e.g. \cite{willwacher2010m}, $\mathrm{GC}_2$ is closed with respect to the differential. We have hence constructed a differential graded Lie algebra $(\mathrm{GC}_2,[-,-], [\MC, -])$. Willwacher's main result in \cite{willwacher2010m} is that the zeroth cohomology of this complex is isomorphic as a Lie algebra to the Grothendieck-Teichm\"uller algebra: $H^0(\mathrm{GC_2}) \cong \grt$.

Let now $\g$ be a graded vector space. We define $\mathrm{CE}(\g)$, the Chevalley-Eilenberg cochain complex of $\g$. Its cochains are given by
\begin{equation*}
\mathrm{CE}(\g) = \prod_{n} \mathrm{Hom}( \mathrm{Sym}^n \g[1], \g[1]) \cong \prod_{n} \mathrm{Hom}({\Lambda^n } \g,\g)[1-n] \cong \mathrm{CoDer}(\mathrm{Sym}\ \g[1]),
\end{equation*}
where $\mathrm{Sym}$ denotes the symmetric algebra and $\mathrm{CoDer}$ the space of coderivations.
% Note that the isomorphism between $\mathrm{Hom}({\textstyle \bigwedge^n }\g,\g)[1-n]$ and $\mathrm{Hom}( \mathrm{Sym}^n \g[1], \g[1])$ is given by the d\'ecalage isomorphism 
% \begin{equation*}
% \mathrm{dec}^n(f)(w_1[1] \cdot \ldots \cdot w_n[1]) = (-1)^{\lvert w_1 \rvert + \lvert w_3 \rvert + \ldots} f(w_1 \wedge \ldots \wedge w_n)[1].
% \end{equation*}
The Chevalley-Eilenberg cochains form a graded Lie algebra with the graded Nijenhuis-Richardson bracket $[-,-]_\mathrm{NR}$. It is defined by all possible insertions of homomorphisms into each other, see \cite{MR0214636} and \cite{lecomte1992multigraded}.
Observe that a 1-cochain $\pi_\g \in \mathrm{Hom}^0(\g\wedge \g, \g)$ in the Chevalley-Eilenberg complex is a Lie algebra structure on $\g$ if and only if it satisfies the equation $[\pi_\g,\pi_\g]_\mathrm{NR}$. Furthermore, any Lie bracket $\pi_\g$ on $\g$ yields a differential $d_{\pi_\g} = [\pi_\g,-]_\mathrm{NR}$ on $\mathrm{CE}(\g)$. This cochain complex with the new differential $d_{\pi_\g}$ is called the Chevalley-Eilenberg cochain complex of the Lie algebra $(\g, \pi_\g)$.

We continue by stating some well-known facts that are treated in more detail in e.g. \cite{manetti1998deformation}, \cite{yekutieli2006continuous} or the book \cite{lodayalgebraic}. Given a zero-cocycle $\gamma$, i.e., $ \gamma \in \mathrm{CE}^0(\g)= \mathrm{CoDer}^0(\mathrm{Sym}\ \g[1])$ and $d_{\pi_\g}(\gamma)=0$, its exponential $\mathrm{exp}(\gamma) = \sum_{n=1}^\infty \frac{1}{n!} \gamma^n$ is an $L_\infty$-automorphism of $\g$. Call zero-cocycles $\gamma$, $\gamma'$ gauge-equivalent if $\gamma - \gamma'$ is a coboundary. A well-known theorem states that if $\gamma$ and $\gamma'$ are gauge-equivalent then  $\mathrm{exp}(\gamma)$ and $\mathrm{exp}(\gamma')$ are homotopy-equivalent. Hence the zero-cohomology $H^0(\mathrm{CE}(\g),d_{\pi_\g})$ of the Chevalley-Eilenberg cochain complex of the Lie algebra $(\g,\pi_\g)$ acts on $\g$ by $L_\infty$-automorphisms modulo homotopy equivalence.

\begin{proof}[Proof of Theorem 1]

We start with an outline of the proof. In \cite{willwacher2010m}, Willwacher shows that the Grothendieck-Teichm\"uller algebra $\grt$ is isomorphic as a Lie algebra to the zeroth cohomology $H^0(\gc_2)$ of the Kontsevich graph complex $\gc_2$. Furthermore, as we have stated above, $H^0\mathrm{CE}(\tpoly(\R^d)[1])$ acts on $\tpoly(\R^d)[1]$ by $L_\infty$-automorphisms modulo homotopy equivalence.
Hence it suffices to prove that there is a morphism of Lie algebras $H^0(\gc_2) \rightarrow H^0 \mathrm{CE}(\tpoly(\R^d)[1])$, because it will follow that $\grt \cong H^0(\gc_2)$ acts on $\tpoly(\R^d)[1]$ by $L_\infty$-automorphisms modulo homotopy equivalence.
Observe that we have to shift $\tpoly(\R^d)$ by 1 to turn the odd Lie algebra into a usual graded Lie algebra. 

The plan is to construct a Lie algebra morphism
\begin{equation*}
  \gc_2 \rightarrow \mathrm{CE}(\tpoly(\R^d)[1]) 
\end{equation*}
commuting with the differentials $[\MC,-]$ of $\gc_2$ and $[\pi_S, -]_\mathrm{NR}$ of $\mathrm{CE}(\tpoly(\R^d)[1])$, where $\pi_S$ denotes the shifted Schouten bracket on $\tpoly(\R^d)[1]$. Recall that $\gc_2$ is a subalgebra of the Lie algebra $\mathrm{fGC}_2$ of $\mathbb{S}$-invariants of the operad $\mathsf{Gra}$. We will identify $\mathrm{CE}(\tpoly(\R^d)[1])$ with the Lie algebra of $\mathbb{S}$-invariants of another operad, $\mathrm{End}_{\tpoly(\R^d)[2]}$. An operad morphism between the two operads will induce a Lie algebra morphism between $\mathrm{fGC}_2$ and $\mathrm{CE}(\tpoly(\R^d)[1])$, which restricts to $\gc_2$. This Lie algebra morphism will not commute with the differentials, but the standard trick of twisting with a Maurer-Cartan element will remedy that. As said before, the existence of such a morphism proves the theorem.

The endomorphism operad $\mathrm{End}_{A}$ of a graded vector space $A$ is defined by $\mathrm{End}_A(n) = \mathrm{Hom}(A^{\otimes n},A)$, where partial composition is the insertion of endomorphisms into each other. The Lie algebra of $\mathbb{S}$-invariants of $\mathrm{End}_{A}$ is the Chevalley-Eilenberg cochain complex $\mathrm{CE}(A[-1])$ of $A[-1]$ with the Nijenhuis-Richardson bracket. Hence $\mathrm{CE}(\tpoly(\R^d)[1])$ is the Lie algebra of $\mathbb{S}$-invariants of $\mathrm{End}_{\tpoly(\R^d)[2]}$.

We construct an operad map $\mathsf{Gra} \rightarrow \mathrm{End}_{\tpoly(\R^d)[2]}$ by mapping a graph $\Gamma$ to the endomorphism
\begin{equation} \label{phigamma}
 \Phi_\Gamma( \gamma_1[2], \ldots, \gamma_n[2] ) = \left( \mu \circ \prod_{(i,j) \in E(\Gamma)} 
	\sum_{k=1}^d  \Delta_{(i,j)}(\gamma_1 \otimes \ldots \otimes \gamma_n) \right) [2],
\end{equation}
where $\Delta_{(i,j)}$ is defined by
\begin{equation*}
 \Delta_{(i,j)} = \frac{\partial}{\partial x^k_{(j)}} \frac{\partial}{\partial \psi_k^{(i)}} + 
	\frac{\partial}{\partial \psi_k^{(j)}} \frac{\partial}{\partial x^k_{(i)}}.
 \end{equation*}
Here, $\mu$ is multiplication of polyvector fields, $\frac{\partial}{\partial x^k_{(j)}}$ and $\frac{\partial}{\partial \psi_k^{(j)}}$ are differentiation with respect to $x^k$ or $\psi_k$, respectively, in the $j$-th argument, and $\prod$ is concatenation of differential operators, where the order follows the ordering of the edges. One checks that the map in fact respects operadic composition. Hence it induces a Lie algebra morphism $\mathrm{fGC}_2 \rightarrow \mathrm{CE}(\tpoly(\R^d)[1]) $ which restricts to the subalgebra $\gc_2$. 

Recall that $\mathrm{CE}(\tpoly(\R^d)[1])$ is equipped with the differential $[\pi_S, -]_\mathrm{NR}$, where $\pi_S$ denotes the shifted Schouten bracket, and $\gc_2$ with the differential $[\MC,-]$. The just constructed Lie algebra morphism $\gc_2 \rightarrow \mathrm{CE}(\tpoly(\R^d)[1]) $ does not respect these differentials, only the zero differential. However, it maps the Maurer-Cartan element $\MC$ to the Schouten bracket. %(the correct sign comes from the d\'ecalage isomorphism)
Hence twisting the morphism with $\MC$ yields a dg-Lie algebra morphism compatible with both differentials. This concludes the proof.
\end{proof}

\begin{cor}
 The Grothendieck-Teichm\"uller group GRT acts on the Schouten algebra $\tpoly(M)$ on a general smooth manifold $M$ by  $L_\infty$-automorphisms defined up to homotopy equivalence. The action depends on the choice of a torsion-free connection on $M$.
\end{cor}
\begin{proof}
We need to check that the $L_\infty$-automorphisms in the image of the morphism
\begin{equation*}
 \grt \cong H^0(\gc_2) \rightarrow H^0 \mathrm{CE}(\tpoly(\R^d)[1]) \rightarrow \mathrm{Aut}(\tpoly(\R^d)[1])/\sim
\end{equation*}
satisfy the four conditions of the main theorem. Recall that we construct a morphism from the graph complex $\gc_2$ to the Chevalley-Eilenberg cochain complex $\mathrm{CE}(\tpoly(\R^d)[1])$. The image in $\mathrm{CE}^0(\tpoly (\R^d)[1])$ of a degree zero element $\Gamma \in \gc_2$ is a degree zero map $\Psi_\Gamma: \mathrm{Sym}^n (\tpoly (\R^d)[2]) \rightarrow \tpoly (\R^d)[2]$. We have to show that its exponential satisfies the four conditions of the main theorem. Observe that it suffices to show that $\Psi_\Gamma$ satisfies the conditions. The important fact is that the graphs in $\gc_2$ have the property that each vertex is at least trivalent.
\subsubsection*{Condition \ref{Rformal}}
The formula \eqref{phigamma} works for $\gamma_1, \ldots, \gamma_n$ in both $\tpoly (\R^d)$ and $\tpoly( \R^d_\mathrm{formal})$. It follows that the whole construction of $\Psi_\Gamma$ runs through for $\tpoly (\R^d_\mathrm{formal})$ as well.
\subsubsection*{Condition \ref{changeofcoordinates}}
Because $\Delta_{(i,j)}$ is invariant under linear change of coordinates of $\R^d_\mathrm{formal}$, this also holds for $\Phi_\Gamma$ and $\Psi_\Gamma$.
\subsubsection*{Condition \ref{vectorfields}}
 It suffices to show that $\Phi_\Gamma$ as defined in \eqref{phigamma} vanishes on vector fields for any graph $\Gamma \in \mathsf{Gra}$. Denote by $\mathfrak{D}(\Gamma)$ the set of all directed graphs that can be obtained by giving a direction to each edge in $\Gamma$. We can then rewrite the definition of $\Phi_\Gamma$ as
\begin{equation*}
 \Phi_\Gamma( \gamma_1[2], \ldots, \gamma_n[2] ) =  \left( \mu \circ \sum_{\Gamma' \in \mathfrak{D}(\Gamma)} \prod_{(i,j) \in E(\Gamma')} 
	\sum_{k=1}^d \left( \frac{\partial}{\partial x^k_{(j)}} \frac{\partial}{\partial \psi_k^{(i)}}  \right)(\gamma_1 \otimes \ldots \otimes \gamma_n) \right)[2].
\end{equation*}
As the vertices of $\Gamma$ are at least trivalent, for any directed $\Gamma'$ in $\mathfrak{D}(\Gamma)$ there is a vertex $l$ that has at least two outgoing edges. This means that we differentiate $\gamma_l$ at least twice with respect to $\psi$. As $\gamma_l$ is a vector field, this is zero. Hence the summand belonging to $\Gamma'$ is zero as well. As we can find such a vertex for any choice of $\Gamma'$, it follows that $\Phi_\Gamma$ is zero on vector fields.
\subsubsection*{Condition \ref{linearvectorfield}}
It suffices to show that $\Phi_\Gamma$ is zero if one of the inputs is a vector field that is linear in the standard coordinates on $\mathbb{R}^d$. We use the same rewriting of the definition of $\Phi_\Gamma$ as before. Pick a $\Gamma'$ in $\mathfrak{D}(\Gamma)$ and assume $\gamma_l$ is a vector field linear in the coordinates of $\R^d$. As the vertex $l$ is at least trivalent, it has at least two outgoing edges or at least two ingoing edges. Hence we differentiate the vector fields $\gamma_l$ at least twice with respect to the $x$ or with respect to the $\psi$, both of which yields zero. It follows that the summand belonging to this $\Gamma'$ is zero. As this can be shown for any choice of $\Gamma'$, we have showed that $\Phi_\Gamma$ is zero if one of the inputs is a vector field that is linear in the standard coordinates on $\mathbb{R}^d$.
\end{proof}

\section*{Acknowledgments}

I would like to thank Sergei Merkulov for proposing this problem to me and for much help. I also had the honor of getting help from Torsten Ekedahl. The anonymous referee contributed with very helpful advice, especially the remark how the dependence on the choice of a connection might be removed. Finally, thanks to my fellow students Johan Alm, Theo Backman, Johan Granåker and Henrik Strohmayer for all the discussions on the many questions I had.

\bibliographystyle{amsplain}
\bibliography{glob}{}

\end{document}